\theoremstyle{plain}
\newtheorem{proposition}{Proposition}
\theoremstyle{definition}
\newtheorem{definition}{Definition}
\newtheorem{Particular Cases}{Particular Cases}
\newtheorem{Particular Case}{Particular Case}
\theoremstyle{remark}
\newtheorem{example}{Example}
\numberwithin{equation}{section} 
\begin{document}
\title[Ricci-Yamabe maps for Riemannian flows]{Ricci-Yamabe maps for Riemannian flows and their volume variation and volume entropy} 

\author{Mircea Crasmareanu}
\address{Faculty of Mathematics \\ University "Al. I.Cuza" \\ Iasi, 700506 \\ Rom\^{a}nia\newline
 http://www.math.uaic.ro/$\sim$mcrasm}

\email{mcrasm@uaic.ro}

\thanks{The second author was supported  by  The Scientific and Technological Research Council of Turkey (T\"UB{\.I}TAK)  Grant No.1059B141600696.}

\author{S{\.I}nem G\"uler}

\address{Department of Mathematics \\
Ayaza\u ga Campus, Faculty of Science and Letters, \\
Istanbul Technical University \\
34469 Maslak, Istanbul \\
Turkey}


\email{singuler@itu.edu.tr}

\begin{abstract}
The aim of this short note is to produce new examples of geometrical flows associated to a given Riemannian flow $g(t)$. The considered flow in covariant symmetric $2$-tensor fields will
be called Ricci-Yamabe map since it involves a scalar combination of Ricci tensor and scalar curvature of $g(t)$. Due to the signs of considered scalars the Ricci-Yamabe flow can be also
a Riemannian or semi-Riemannian or singular Riemannian flow. We study the associated function of volume variation as well as the volume entropy. Finally, since the two-dimensional case
was the most handled situation we express the Ricci flow equation in all four orthogonal separable coordinate systems of the plane.
\end{abstract}


\subjclass[2010]{53C44, 53A05}

\keywords{Riemannian flow, Ricci-Yamabe map, volume variation, volume entropy}

\maketitle

\section*{Introduction}

The recent applications of the Hamilton-Ricci flow (\cite{c:ln}) in Perelman's proof of the geometrization conjecture and Brendle-Schoen's proof of the differentiable sphere theorem attract the attention to the geometry of Riemannian flows. These flows and other general geometric flows have been applied to several topological, geometrical and physical problems and we cite only some of them: the mean curvature flow, the K\"ahler-Ricci, Calabi and Yamabe flows, the curve shortening flows and so on.

The present paper aims to introduce a scalar combination of Ricci and Yamabe flow under the name of {\it Ricci-Yamabe map}. Due to the signs of involved scalars ($\alpha $ and $\beta $)
the Ricci-Yamabe map can be also a Riemannian or semi-Riemannian or singular Riemannian flow. This kind of multiple choices can be useful in some geometrical or physical (e.g.
relativistic) theories. Another strong motivation for our map is the fact that, although the Ricci and Yamabe flow are identical in dimension two, they are essentially different in
higher dimensions. Let us remark that an interpolation flow between Ricci and Yamabe flows is considered in \cite{c:c} under the name {\it Ricci-Bourguignon flow} but it depends on a
single scalar, see also the pages 79-80 of the book \cite{c:g}. A normalized version of Ricci-Bourguignon-Yamabe flow is studied in \cite{m:t} from the point of view of spectral
geometry.

In the first section we introduce and discuss five examples of Ricci-Yamabe maps: conformal (particular cone), convex-Euclidean, generalized Poincar\'e, generalized cigar flow and time dependent 2D warped metrics; a sixth example, consisting in gradient flows introduced by Min-Oo and Ruh in \cite{mo:r}, is very briefly recalled due to its relationship with the Ricci flow. For some of them the general dimension $n$ is too complicated from the point of view of their Ricci tensor and scalar curvature and then the case $n=2$ is particularly analyzed with respect to the (global sometimes) coordinates $(u, v)$; on this way the present study can be useful in the shape analysis of surfaces following the path of \cite{z:g}. Also, for an $(\alpha , \beta )$-Ricci-Yamabe flow we express the variation of some geometrical quantities: the Christoffel symbols, the scalar curvature and the volume form.

Another main topic studied for the introduced Ricci-Yamabe map is the volume variation to which the second section is devoted. The given Riemannian flow $g(t)$ will be called $(\alpha , \beta)$-{\it RY-expanding} or {\it steady} or ${\it shrinking}$ if this variation is positive, zero or negative, respectively. For the example of generalized cigar flow with an exponential potential we derive all three possibilities: expanding, steady and shrinking. Also, we get two examples of $(\alpha , \beta)$-RY-expanding flows which are uniform i.e. they depend only on the time variable $t$ and not on the coordinates of $M$. We finish the second section with a study of the volume entropy for an $(\alpha , \beta)$-Ricci-Yamabe flow following the technique of \cite{v:c}.

In the last section we study a generalization of the conformal example. More precisely, we allow a general conformal transformation $f(t, x, y)(dx^2+dy^2)$ of the Euclidean plane metric
and its corresponding Ricci flow equation. The previous coordinate system $(x, y)$ being provided by the Euclidean geometry can be without a geometrical or physical significance for a
different geometry. But it has been known that for some constant curvature spaces there exist {\it orthogonal separable coordinate systems}, namely coordinate systems for which a given
Hamiltonian system in classical me\-cha\-nics and Schr\"odinger equation of the quantum mechanics admit solutions via separation of variables; hence these coordinate systems are related
to the superintegrability problem, \cite{b:cp}. For example, in the real 2D space there are four such systems, for the complex plane there are six while for 2D sphere there are only two.
We derive the expression of the Ricci flow equation in all 2D coordinates systems and also for their solitonic analogs. On this way, we add to the PDE flavor of the topic of Ricci flow
three new equations: the polar, the parabolic and the elliptic Ricci flow equation.


\section{The Ricci-Yamabe map of a Riemannian flow}

For a smooth $n$-dimensional manifold $M^n$ let $T^s_2(M)$ be the linear space of its symmetric tensor fields of $(0, 2)$-type and $Riem(M)\subsetneqq T^s_2(M)$ be the infinite space of its Riemannian metrics. Following the general theory of geometric flows we introduce:

\begin{definition}\label{def:1.1}
A {\it Riemannian flow} on $M$ is a smooth map:
\begin{equation}
\label{1.1}
g:I\subseteq \mathbb{R}\rightarrow Riem(M)
\end{equation}
where $I$ is a given open interval. We can call it also as {\it time-dependent} (or {\it non-stationary}) {\it Riemannian metric}.
\end{definition}

Throughout this work we fix a Riemannian flow $g(\cdot )$ and one denotes by $Ric(t)$ the Ricci tensor field of $g(t)$ and by $R(t)$ the corresponding scalar curvature. Let also be given $\alpha $ and $\beta $ some scalars.

\begin{definition}\label{def:1.2}
 The map $RY^{(\alpha , \beta , g)}:I\rightarrow T^s_2(M)$ given by:
\begin{equation}
\label{1.2}
RY^{(\alpha, \beta ,g)}(t):=\frac{\partial g}{\partial t}(t)+2\alpha Ric(t)+\beta R(t)g(t)
\end{equation} 
is called {\it the $(\alpha , \beta)$-Ricci-Yamabe map} of the Riemannian flow $(M, g)$. If $RY^{(\alpha, \beta ,g)}\equiv 0$ then $g(\cdot )$ will be called {\it an $(\alpha , \beta)$-Ricci-Yamabe flow}.

\end{definition}

\begin{Particular Case}\label{part case:1.3}
We have:
\begin{enumerate}
\item[(1)]  the equation $RY^{(1, 0, g)}\equiv 0$ is the Ricci flow while the equation $RY^{(0, 1, g)}\equiv 0$ is the Yamabe flow
(\cite[p. 520]{c:ln}). The equation of an $(\alpha /2, \beta )$-Ricci-Yamabe flow appears at page 50 in \cite{b:u} where the Ricci curvature is interpreted as a vector field on the space of metrics.   
\item[(2)] for $n=2$ since $Ric=\frac{R}{2}g=Kg$ with $K$ being the Gaussian curvature we get:
\begin{equation}
\label{1.3}
RY^{(\alpha , \beta , g)}(t)=\frac{\partial g}{\partial t}(t)+2(\alpha +\beta )K(t)g(t).
\end{equation}
Hence the Ricci and Yamabe flows coincide on surfaces. 
\item[(3)] Due to the signs of $\alpha $ and $\beta $ the Ricci-Yamabe map can be also a Riemannian or semi-Riemannian or singular (degenerate) Riemannian flow and this kind of freedom can be useful in some geometrical or physical (e.g. relativistic) theories. For example, a recent bi-metric approach of spacetime geometry appears in \cite{a:ks} and \cite{b:c}.
\end{enumerate}

\end{Particular Case}

\begin{example}\label{exp:1.4.}
 
$(1)$ (\textit{Cone and conformal flow}) Fix $g\in Riem(M)$ and a smooth $f:I\rightarrow \mathbb{R}^*_{+}=(0, +\infty )$. We define $g(t):=f(t)g$ which we call {\it conformal flow}. In particular, for $I=(0, +\infty )$ and $f(t)=t$ we obtain {\it the cone flow}. Applying Exercise 1.11 of \cite[p. 6]{c:ln} one have:
\begin{equation}
\label{1.4}
Ric(t)=Ric(g), \quad R(t)=\frac{1}{f(t)}R(g) 
\end{equation}
and then the $(\alpha , \beta )$-{\it Ricci-Yamabe-conformal map} is:
\begin{equation}
\label{1.5}
RY^{(\alpha , \beta , g)}(t)=[f^{\prime }(t)+\beta R(g)]g+2\alpha Ric(g),
\end{equation}
which becomes constant for the cone flow. In particular, if $g$ is an Einstein metric, from:
\begin{equation}
\label{1.6}
Ric(g)=\frac{R(g)}{n}g,
\end{equation}
we get:
\begin{equation}
\label{1.7}
RY^{(\alpha , \beta , g)}(t)=[f^{\prime }(t)+(\beta +\frac{2\alpha }{n})R(g)]g,
\end{equation}
which is a time-dependent Einstein metric. \\

$(2)$ A fixed Riemannian geometry $(M, g)$ is called {\it convex-Euclidean} (\cite{b:cp}) if it supports a Riemannian flow:
\begin{equation}
\label{1.8}
g(t)=(1-t)g+tI, 
\end{equation}
with $t\in [0, 1]$ and $I$ the covariant version of the Kronecker tensor field. For example, every pa\-ral\-le\-li\-za\-ble manifold, in particular any Lie group, is a convex-Euclidean one. In the following, due to complicated equations on the general case we restrict to $n=2$ and suppose that $g$ is isothermal: $g(u, v):=E(u, v)I$ with the smooth function $E>0$ on the surface $M$. Then all terms of the convex-Euclidean flow are isothermal:
\begin{equation}
\label{1.9}
g(t)=[(1-t)E+t]I.
\end{equation}
The expression of the Gaussian curvature for isothermal metrics is well-known and hence:
\begin{equation}
\label{1.10}
K(t)=-\frac{1}{2[(1-t)E+t]}\Delta _{u, v}(\ln [(1-t)E+t]),
\end{equation}
where $\Delta _{u, v}$ is the usual 2D Laplacian: $\Delta _{u, v}=\partial ^2_{uu}+\partial ^2_{vv}$. In conclusion:
\begin{equation}
\label{1.11}
RY^{(\alpha , \beta , g)}(t)=[1-E-(\alpha +\beta )\Delta _{u, v}(\ln [(1-t)E+t])]I,
\end{equation}
which is a time-dependent isothermal (semi-, singular) metric. \\

$(3)$ (\textit{Poincar\'e flow}) For the Poincar\'e half space model of hyperbolic geometry in $\mathbb{R}^n_{+}=\{(x^1,...,x^n)\in \mathbb{R}^n; x^n>0\}$ (\cite[p. 135]{p:p}) we define {\it the Poincar\'e flow} on $I=\mathbb{R}$ as:
\begin{equation}
\label{1.12}
g(t)=\frac{1}{(x^n)^t}\left[d(x^1)^2+...+d(x^n)^2\right]=\frac{1}{(x^n)^t}g^n_e,
\end{equation}
which satisfies $\partial _tg(t)=(-\ln x^n)\cdot g(t)$ and can be considered as a conformal flow for the Euclidean metric $g^n_e$ on $\mathbb{R}^n_{+}$. Applying the formulae of \cite[p. 35]{c:ln} we have:
\begin{equation}
\label{1.13}
Ric(t)=\frac{(2-n)t^2-2t}{4(x^n)^2}g_e^{n-1}+\frac{(1-n)t}{2(x^n)^2}d(x^n)^2, \quad R(t)=\frac{1-n}{(x^n)^{2-t}}\left[t+\frac{(n-2)t^2}{4}\right]
\end{equation}
and then:
\begin{align}
\label{1.14}
RY^{(\alpha , \beta , g)}(t)\cdot (x^n)^2=&\{(1-n)\beta \left[t+\frac{(n-2)t^2}{4}\right]-(x^n)^{2-t}\ln x^n\}g^n_e\\ \notag
+&[(1-\frac{n}{2})t^2-t]\alpha g_e^{n-1}+(1-n)\alpha td(x^n)^2. 
\end{align}
For $n=2$ we obtain:
\begin{equation}
\label{1.15}
RY^{(\alpha , \beta , g)}(t)=\frac{-(x^2)^{2-t}\ln x^2-(\alpha +\beta )t}{(x^2)^2}g^2_e. 
\end{equation}

$(4)$ (\textit{Generalized cigar flow}) In \cite[p. 154]{c:ln} it is given the Hamilton cigar $2D$ metric on $M=\mathbb{R}^2$ as a steady Ricci soliton:
\begin{equation}
\label{1.16}
g_c(u, v)=\frac{1}{1+u^2+v^2}I
\end{equation}
and then we introduce {\it the generalized cigar flow}:
\begin{equation}
\label{1.17}
g^f_c(t)=\frac{1}{f(t)+u^2+v^2}I
\end{equation}
for a smooth $f:\mathbb{R}\rightarrow \mathbb{R}^{*}_+$ with $f(0)=1$. Its Ricci-Yamabe map is:
\begin{equation}
\label{1.18}
RY^{(\alpha , \beta , g^f)}(t)=\frac{4(\alpha +\beta )f(t)-f^{\prime }(t)}{(f+u^2+v^2)^2}I
\end{equation}
and then $RY^{(\alpha , \beta , g^f)}\equiv 0$ if and only if $f(t)=e^{4(\alpha +\beta )t}$. Recalling that for $\alpha =1$ and $\beta =0$ we have a Ricci flow we re-obtain the result of \cite{c:ln} that $(1.17)$ is a Ricci flow for $f(t)=e^{4t}$. \\

$(5)$ (\textit{Time dependent warped metrics}) Let the 2D Riemannian flow:
\begin{equation}
\label{1.19}
g(t)=du^2+f(t)G(u)dv^2
\end{equation}
again for a smooth $f:\mathbb{R}\rightarrow \mathbb{R}^{*}_+$ with $f(0)=1$. Its $(\alpha , \beta )$-Ricci-Yamabe map is:
\begin{equation}
\label{1.20}
RY^{(\alpha , \beta , g)}(t)=f^{\prime }(t)G(u)dv^2-\frac{\alpha +\beta }{\sqrt{f(t)G}}\Delta \ln G\cdot g(t).
\end{equation}
For $G(u)=u^2$ and $f(t)=t^2$ which do not satisfies $f(0)=1$ the metric $\eqref{1.19}$ appears in the Exercise 1.6.7 of \cite[p. 33]{p:p}. With $G(u)=sn_k^2(u)$ of \cite[p. 117]{p:p} we obtain the time dependent rotationally symmetric metrics generalizing the constant curvature ($=k$) metrics. Hence this flow $\eqref{1.19}$ has the $(\alpha , \beta )$-Ricci-Yamabe map:
\begin{equation}
\label{1.21}
RY^{(\alpha , \beta , g)}(t)=f^{\prime }(t)sn_k^2(u)dv^2-\frac{2k(\alpha +\beta )}{\sqrt{f(t)}}\cdot g(t).
\end{equation}
Let us note that Ricci flows on general warped product metrics are studied in \cite{wj:l}. \\

$(6)$ (\textit{Gauge flows of Min-Oo-Ruh type}) Starting with a given Riemannian metric $g$, a gradient flow is considered in \cite{mo:r} by means of a family $\theta _t$ of tensor fields of $(1, 1)$-type:
\begin{equation}
\label{1.22}
g(t)=g(\theta _t \cdot , \theta _t\cdot )
\end{equation}
with $\theta _0=I$. The infinitesimal gauge transformation $\dot{\theta }:=\frac{d}{dt}\theta _t|_{t=0}$ is used to express the infinitesimal change of the Levi-Civita connection. 

The
choice $\dot{\theta }=-Ric$ of $(1, 1)$-type yields exactly the Hamilton's Ricci flow and the Lagrangian of the gradient flow then becomes a Yang-Mills functional $D\rightarrow \int
_M\|F^D\|^2$ on the space of Cartan connections of hyperbolic type. 

The choice $\dot{\theta }=-\alpha Ric-\frac{\beta }{2}RI$ yields the $(\alpha , \beta )$-Ricci-Yamabe flow. Moreover,
given the vector field $\xi $ and $\lambda \in \mathbb{R}$ the choice $Ric=-\nabla ^g\xi -\lambda I$ with $\nabla ^g\xi $ the covariant derivative of $\xi $ with respect to the fixed
metric $g$ yields {\it the Ricci soliton} $(g, \xi , \lambda )$ on $M$ i.e. a self-similar solution of the Ricci flow, \cite{c:ln}. \quad $\Box $

\end{example} 

We finish this section with the variation of some geometrical objects along an $(\alpha , \beta)$-Ricci-Yamabe flow:

\begin{proposition}\label{prop:1.5}
Let $g(t)$ be an $(\alpha , \beta)$-Ricci-Yamabe flow. Then: 
\begin{itemize}
\item[(1)] {\it the variation of the Christoffel symbols is}:
\begin{align}
\label{1.23}
\partial _t\Gamma ^k_{ij}(t)=&\alpha g^{kl}\left(\nabla _lR_{ij}-\nabla _iR_{jl}-\nabla_jR_{il}\right)(t)\\ \notag
-&\frac{\beta }{2}\left[(\nabla _iR)(t)\delta ^k_j+(\nabla _jR)(t)\delta ^k_i-(\nabla ^kR)(t)g_{ij}(t)\right], 
\end{align}
\item[(2)] {\it the variation of the scalar curvature is}:
\begin{equation}
\label{1.24}
\partial _tR(t)=[\alpha +(n-1)\beta ]\Delta _{g(t)}R(t)+2\alpha \|Ric(t)\|_{g(t)}^2+\beta R^2(t), 
\end{equation}
{\it with $\Delta _{g(t)}$ the Laplacian with respect to $g(t)$; for $n=2$ this relation becomes}:
\begin{equation}
\label{1.25}
\left\{ \begin{array}{ll}
\partial _tR(t)=(\alpha +\beta )\left[\Delta _{g(t)}R(t)+R^2(t)\right], \\
\partial _tK(t)=(\alpha +\beta )\left[\Delta _{g(t)}K(t)+2K^2(t)\right],\\
\end{array} \right.
\end{equation}
\item[(3)] the variation of the volume form is:
\begin{equation}
\label{1.26}
\partial _td\mu (t)=-(\alpha +\frac{n}{2}\beta )R(t)d\mu (t).
\end{equation}
 It follows that if $2\alpha +n\beta =0$, then the volume is constant and the variation of the scalar curvature is:
\begin{equation}
\label{1.27}
\frac{1}{\alpha}\partial _tR(t)=\frac{2-n}{n}\Delta _{g(t)}R(t)+2\|Ric(t)\|_{g(t)}^2-\frac{2}{n}R^2(t), 
\end{equation}
provided $\alpha \neq 0$; particularly is zero in dimension $n=2$. For an $(\alpha , \beta)$-Ricci-Yamabe flow on a closed Riemannian surface $M^2$ with $\alpha +\beta \geq 1$ and $t\in (0, T]$, we have the next global inequality on $M$:
\begin{equation}
\label{1.28}
K(t)\geq -\frac{1}{2(\alpha +\beta )t}. 
\end{equation}
\end{itemize}
\end{proposition}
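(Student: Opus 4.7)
The three parts of Proposition 1.5 are essentially independent, and I would tackle them in order, recycling the key quantity $h_{ij} := \partial_t g_{ij} = -2\alpha R_{ij} - \beta R\, g_{ij}$ that defines the flow. The first three items reduce to standard algebra once the correct variation formula is invoked; the last item is the only genuinely analytic step.

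For part (1), I would invoke the general evolution identity for Christoffel symbols under any metric deformation, $\partial_t \Gamma^k_{ij} = \frac{1}{2} g^{kl}(\nabla_i h_{jl} + \nabla_j h_{il} - \nabla_l h_{ij})$. Substituting $h$ and using $\nabla g \equiv 0$ lets the covariant derivatives pass through the metric factor in $\beta R g_{ij}$; the contractions $g^{kl} g_{jl} = \delta^k_j$, $g^{kl} g_{il} = \delta^k_i$, $g^{kl}\nabla_l R = \nabla^k R$ then produce the Kronecker terms of the second line of (1.23), while the $-2\alpha \mathrm{Ric}$ part supplies the first line after a sign reorganization.

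For part (2), the standard formula for the variation of the scalar curvature reads $\partial_t R = -\Delta_{g(t)}(\mathrm{tr}_g h) + \nabla^i \nabla^j h_{ij} - \langle h, Ric\rangle_{g(t)}$. With our $h$ one computes $\mathrm{tr}_g h = -(2\alpha + n\beta) R$, then uses the contracted Bianchi identity $\nabla^j R_{ij} = \frac{1}{2} \nabla_i R$ to reduce the double divergence to $-(\alpha+\beta)\Delta R$, and evaluates $\langle h, Ric\rangle = -2\alpha \|Ric\|^2 - \beta R^2$. Collecting the three contributions yields (1.24), and the two-dimensional specialization (1.25) follows from $Ric = Kg$, $R = 2K$ and $\|Ric\|^2 = 2K^2 = R^2/2$. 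The volume form identity (1.26) comes from the universal formula $\partial_t d\mu = \frac{1}{2}(\mathrm{tr}_g h)\, d\mu$, and (1.27) is an algebraic substitution of $\beta = -2\alpha/n$ back into (1.24).

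The real work is the final estimate (1.28). I would apply the parabolic scalar maximum principle to the second equation in (1.25), rewritten as $\partial_t K - (\alpha+\beta)\Delta_{g(t)} K = 2(\alpha+\beta) K^2$; the hypothesis $\alpha + \beta \geq 1$ guarantees forward parabolicity with a coefficient bounded below by one. Setting $k(t) := \min_{M} K(\cdot, t)$ on the closed surface, the maximum principle (Hamilton's trick, reading the evolution in the Dini-derivative/barrier sense at a minimizing point where $\Delta K \geq 0$) yields $k'(t) \geq 2(\alpha+\beta)\, k(t)^2$. The target bound $-1/[2(\alpha+\beta) t]$ is precisely the singular solution of $y' = 2(\alpha+\beta) y^2$ that blows up at $t = 0$, so (1.28) follows by comparing $k(t)$ with this barrier. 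The main subtlety, and essentially the only obstacle, is justifying the comparison independently of the sign or size of $K(0)$: one compares instead with $y_\varepsilon(t) = -1/[2(\alpha+\beta)(t - \varepsilon)]$ on $[\varepsilon, T]$, where $y_\varepsilon(\varepsilon) = -\infty < k(\varepsilon)$ is automatic, and then lets $\varepsilon \to 0^+$.
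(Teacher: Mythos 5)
Your proposal is correct and follows essentially the same route as the paper: both substitute the variation tensor $h=\partial_t g=-2\alpha\,Ric-\beta R\,g$ into the standard evolution formulae of Chow--Lu--Ni for $\Gamma^k_{ij}$, $R$ and $d\mu$, reduce the divergence terms via the contracted Bianchi identity to $-(\alpha+\beta)\nabla R$ and $-(\alpha+\beta)\Delta R$, and obtain \eqref{1.28} from the maximum principle applied to the evolution of $K$. You merely spell out the ODE-comparison details that the paper leaves implicit in the phrase ``follows from the maximum principle.''
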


\begin{proof}
We apply the formulae of \cite{c:ln} with the variation tensor field:
\begin{equation}
\label{1.29}
v(t)=-2\alpha Ric(t)-\beta R(t)g(t)
\end{equation}
and the corresponding variation function:
\begin{equation}
\label{1.30}
V(t)=-(2\alpha +n\beta )R(t). 
\end{equation}
We have $div\ v=-(\alpha +\beta )dR$ and $div(div\ v)=-(\alpha +\beta )\Delta R$. For the item (3) at the Ricci flow $\alpha =1$ we derive the corresponding {\it normalized Ricci flow} of \cite[p. 128]{c:ln} with the associated scalar $\beta =-\frac{2}{n}$ multiplied with the average scalar curvature. The inequality $(1.28)$ follows from the maximum principle. 
\end{proof}

\begin{Particular Case}\label{particular case:1.6}
 Suppose that the given Riemannian flow $g(t)$ is {\it Ricci-recurrent} i.e. for a fixed time-dependent $1$-form $\eta $ we have:
\begin{equation}
\label{1.31}
\nabla _l(t)R_{ij}(t)=\eta _l(t)R_{ij}(t). 
\end{equation}
Then $\eqref{1.23}$ becomes, with $\eta ^k=g^{ka}\eta _a$:
\begin{align}
\label{1.32}
\partial _t\Gamma ^k_{ij}(t)=&\alpha \left(\eta ^k(t)R_{ij}(t)-\eta _i(t)R_{j}^k(t)-\eta _j(t)R_{i}^k(t)\right)\\ \notag
-&\frac{\beta }{2}\left(\eta  _i(t)\delta ^k_j+\eta _j(t)\delta ^k_i-g_{ij}\eta ^k(t)\right)R(t),
\end{align}
since $\nabla _l(t)R(t)=\eta _lR(t)$. The relation $\eqref{1.25}$ becomes:
\begin{equation}
\label{1.33}
\partial _tR(t)=(\alpha +\beta )\left[(div _{g(t)}\eta (t)+\|\eta (t)\|^2_{g(t)})R(t)+R^2(t)\right].
\end{equation}
In fact, in dimension $2$ if $K(t)>0$, then $\eqref{1.31}$ holds for:
\begin{equation}
\label{1.34}
\eta =d(\ln K)
\end{equation}
and hence $\eqref{1.32}$ reads:
\begin{equation}
\label{1.35}
\partial _tK(t)=(\alpha +\beta )\left[(\Delta _{g(t)}\ln K(t)+\|\nabla \ln K(t)\|^2_{g(t)})K(t)+2K^2(t)\right].
\end{equation}
 
\end{Particular Case}

\section{The volume variation of Ricci-Yamabe maps and volume entropy of a Ricci-Yamabe flow}

Inspired by \cite{m:c} and \cite{c:k} we introduce:

\begin{definition}\label{def:2.1}
 The {\it $(\alpha , \beta )$-Ricci-Yamabe volume variation} for the Riemannian flow $g(\cdot )$ is the smooth function $V^{(\alpha , \beta , g)}:M\times I\rightarrow M\times I$ given by:
\begin{equation}
\label{2.1}
\partial _tV^{(\alpha , \beta , g)}(t):=Tr _{g(t)}RY^{(\alpha , \beta , g)}(t)=\sum _{i, j=1}^ng^{ij}(t){RY}^{(\alpha , \beta , g)}_{ij}(t)
\end{equation}
where, as usual, $g^{ij}(t)$ are the components of inverse $g^{-1}(t)$. The flow will be called $(\alpha , \beta)$-{\it RY-expanding} or {\it steady} or ${\it shrinking}$ if this variation is positive, zero or negative, respectively. Moreover, if this variation depends only on $t$, then we say that the flow is {\it uniform}.
\end{definition}

\begin{example}\label{exp:2.2}

$(1)$ For conformal flows we have from $\eqref{1.5}$:
\begin{equation}
\label{2.2}
\partial _tV^{(\alpha , \beta , g)}(t)=\frac{nf^{\prime }(t)+(2\alpha +n\beta )R(g)}{f(t)}
\end{equation}
which yields:
\begin{equation}
\label{2.3}
V(t)^{(\alpha , \beta , g)}=n\ln f(t)+(2\alpha +n\beta )R(g)F(t)
\end{equation}
with $F$ an anti-derivative for $\frac{1}{f}$. In particular, for $2\alpha +n\beta =0$ or flat metric $g$, the conformal flow is uniform $(\alpha , \beta )-RY-$expanding. \\

$(2)$ For the two-dimensional Poincar\'e flow by using $\eqref{1.15}$ we get:
\begin{equation}
\label{2.4}
V(t)^{(\alpha , \beta , g)}=-\frac{\alpha +\beta}{2}t^2+(x^2)^{2-t}.
\end{equation}
Hence, on the horizontal lines $x^2=constant$ this flow is $(\alpha , \beta )$-RY-expanding, steady or shrinking according to $\alpha +\beta $ being a negative, null or positive number. The existence of a preferred direction in the physical space is discussed in \cite[p. 50]{b:l}. \\

$(3)$ For the generalized cigar flow its variation of volume is:
\begin{equation}
\label{2.5}
\partial _tV^{(\alpha , \beta , g)}(t)=\frac{4(\alpha +\beta )f-f^{\prime }}{f+u^2+v^2}.
\end{equation}
For example, choosing an exponential potential $f(t)=e^{ct}$ we derive that the flow is $(\alpha , \beta)$-RY-expanding, steady or shrinking according to $\alpha +\beta $ being greater, equal or lower than $\frac{c}{4}$.  \\

$(4)$ For the time dependent warped metrics $\eqref{1.19}$ with $G(u)=sn_k^2(u)$ from $\eqref{1.21}$ we have:
\begin{equation}
\label{2.6}
V^{(\alpha , \beta , g)}(t)=\ln f(t)-4k(\alpha +\beta )F(t)
\end{equation}
with $F$ and anti-derivative for $\frac{1}{\sqrt{f}}$. In particular, for $\alpha +\beta =0$ or flat metric $g$, this flow is uniform $(\alpha , \beta )-RY-$expanding. \quad $\Box $

\end{example}

More generally, the steady case is characterized by:

\begin{proposition}\label{2.3}
 The Riemannian flow $g(\cdot )$ is $(\alpha , \beta)$-RY-steady if and only if its scalar curvature is:
\begin{equation}
\label{2.7}
(2\alpha +n\beta )R(t)=-Tr_{g(t)}(\partial _tg(t)).
\end{equation}
\end{proposition}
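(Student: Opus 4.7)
The plan is to unwind the definitions and take a trace. By Definition 2.1, the flow is $(\alpha,\beta)$-RY-steady exactly when $\partial_t V^{(\alpha,\beta,g)}(t) = Tr_{g(t)} RY^{(\alpha,\beta,g)}(t) = 0$ identically. So the entire statement reduces to computing the trace of the tensor in \eqref{1.2} with respect to $g(t)$ and isolating $R(t)$.

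First I would substitute formula \eqref{1.2} and use linearity of the trace to write
\[
Tr_{g(t)} RY^{(\alpha,\beta,g)}(t) \;=\; Tr_{g(t)}\bigl(\partial_t g(t)\bigr) \;+\; 2\alpha\, Tr_{g(t)} Ric(t) \;+\; \beta R(t)\, Tr_{g(t)} g(t).
\]
Next I would invoke the two standard identities $Tr_{g(t)} Ric(t) = R(t)$ (definition of scalar curvature) and $Tr_{g(t)} g(t) = n$ (since $g^{ij}g_{ij}=n$). Substituting gives
\[
Tr_{g(t)} RY^{(\alpha,\beta,g)}(t) \;=\; Tr_{g(t)}\bigl(\partial_t g(t)\bigr) \;+\; (2\alpha + n\beta)\,R(t).
\]
Setting the left-hand side equal to zero and rearranging yields \eqref{2.7}, and the converse is immediate by reading the same chain of equalities backward.

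There is no real obstacle here: the statement is a direct algebraic consequence of the trace formula applied to \eqref{1.2}, and the only care needed is to remember that the trace is taken with the time-dependent inverse metric $g^{ij}(t)$ so that all three terms are evaluated at the same instant $t$. I would present the computation in a single short display and conclude with the equivalence.
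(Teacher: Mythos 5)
Your argument is correct and is essentially the paper's own proof: the authors likewise take the $g(t)$-trace of \eqref{1.2}, use linearity of the trace together with $Tr_{g(t)}Ric(t)=R(t)$ and $Tr_{g(t)}g(t)=n$, and rearrange to obtain \eqref{2.7}. Nothing further is needed.
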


\begin{proof}
From $\eqref{2.1}$ we derive that the steady case is characterized by:
\begin{equation}
\label{2.8}
n\beta R(t)=-Tr_{g(t)}(\partial _tg(t)+2\alpha Ric(t)).
\end{equation}
Since $Tr$ is a linear operator we derive immediately the conclusion. 
\end{proof}

We finish this section with a study of the volume entropy for an $(\alpha , \beta)$-Ricci-Yamabe flow supposing that $M$ is compact and with $n\geq 3$. Let $\pi : \tilde{M}\rightarrow M$ be the universal cover of $M$ and $\omega (M, g, r>0)=\frac{1}{r}\ln V_{\tilde{g}}(\tilde{B}(r))$ be {\it the volume growth function} of the Riemannian metric $g$; here $V_{\tilde{g}}$ is the volume with respect to the cover metric $\tilde{g}$. Then {\it the volume entropy} of $(M, g)$ is: $h(M, g)=\lim _{r\rightarrow \infty }\omega (M, g, r)$.

We have a result similar to Proposition 2.1 of \cite{v:c}:

\begin{proposition}\label{prop:2.4}
 Let $g(t)$ be an $(\alpha , \beta)$-Ricci-Yamabe flow on the compact $M$ satisfying: 
 \begin{itemize}
\item[(i)] {\it the injectivity radius $i(M, g(t))\geq i\in \mathbb{R}$ uniformly in time}; {\it in particular $g(t)$ has the sectional curvature uniformly bounded from above by a positive constant and
an uniform lower bound of the lengths of closed geodesics},
\item[(ii)] $n\geq 3$ {\it and} $2\alpha +n\beta >0$. 
\end{itemize}
 Then the volume entropy $h(M, g(t))$ is nondecreasing.
\end{proposition}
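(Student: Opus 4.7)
The plan is to adapt the argument in Proposition 2.1 of \cite{v:c}. First, by Proposition 1.5(3), under an $(\alpha, \beta)$-Ricci-Yamabe flow the volume form evolves by
$$\partial_t d\mu(t) = -\tfrac{1}{2}(2\alpha + n\beta) R(t) d\mu(t),$$
and hypothesis (ii) makes the scalar coefficient strictly positive, so the sign of the variation is controlled by $-R(t)$ alone. Lifting the flow to the universal cover $\pi: \tilde M \to M$ (a local isometry, so $\tilde g(t) = \pi^* g(t)$ is again an $(\alpha, \beta)$-Ricci-Yamabe flow with the same volume-form evolution), the quantity to track is $V_{\tilde g(t)}(\tilde B_{\tilde g(t)}(\tilde x_0, r))$ for a fixed basepoint $\tilde x_0 \in \tilde M$.

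Next, I would differentiate this ball volume in $t$ and split the result into a bulk term (from the evolving volume form) and a boundary term (from the deformation of the metric ball itself, since distances depend on $\tilde g(t)$). Dividing by $r$ and by the total volume gives $\partial_t \omega(M, g(t), r)$, with $\omega(M, g(t), r) = r^{-1} \ln V_{\tilde g(t)}(\tilde B(r))$. Hypothesis (i) then plays two distinct roles: the uniform injectivity-radius bound, together with the implicit sectional-curvature upper bound and the uniform lower bound on closed-geodesic length, provides uniform Bishop-Gromov/Croke-type comparison constants controlling both ball volumes and boundary areas; in parallel, the maximum principle applied to the scalar-curvature evolution equation (1.24) yields a uniform-in-time lower bound on $R(t)$, so that (combined with $2\alpha + n\beta > 0$) the bulk contribution to $\partial_t V_{\tilde g(t)}(\tilde B(r))$ has a definite sign compatible with monotonicity. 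The boundary term, being at most polynomial in $r$, contributes an $O(1/r)$ piece to $\partial_t \omega$ which vanishes as $r \to \infty$, so only the bulk term survives in the limit and yields $\partial_t h(M, g(t)) \geq 0$.

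The main technical obstacle is controlling the boundary term cleanly: one must show that the normal speed of $\partial \tilde B_{\tilde g(t)}(r)$ is uniformly bounded (using the tensorial form of $\partial_t g(t) = -2\alpha \, \mathrm{Ric}(t) - \beta R(t) g(t)$ together with (i)) and that $\mathrm{Area}(\partial \tilde B(r))$ grows only polynomially relative to $V_{\tilde g(t)}(\tilde B(r))$, so that after division by $r$ its contribution is negligible in the entropy limit. Without the injectivity-radius bound the cover could collapse and the boundary correction could compete with the bulk term; hypothesis (i) is precisely what prevents this and is therefore essential to the argument.
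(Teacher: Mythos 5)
Your skeleton --- lift to the universal cover, differentiate $\omega(M,g(t),r)=r^{-1}\ln V_{\tilde g(t)}(\tilde B(r))$, split the derivative into a bulk term from the evolving volume form and a boundary term from the moving metric ball, and send $r\to\infty$ --- is the same as the paper's, which imports the computation from \cite{v:c} to get \eqref{2.9}. But the decisive estimate on the bulk term goes the wrong way in your write-up. From \eqref{1.26} the bulk contribution is
\begin{equation*}
-\frac{2\alpha+n\beta}{2r\,V_{\tilde g(t)}(\tilde B(r))}\int_{\tilde B(r)}\tilde R(t)\,d\tilde\mu(t),
\end{equation*}
and since $2\alpha+n\beta>0$, concluding $\partial_t h\geq 0$ requires an \emph{upper} bound on the averaged scalar curvature. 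That is exactly what the sectional-curvature upper bound built into hypothesis (i) supplies pointwise, $\tilde R\leq n(n-1)C$, giving \eqref{2.10} and hence $\partial_t\omega\geq -\tfrac{(2\alpha+n\beta)n(n-1)C}{2r}\to 0$. You instead invoke the maximum principle on \eqref{1.24} to obtain a uniform \emph{lower} bound on $R(t)$. A lower bound on $R$ only bounds $\partial_t\omega$ from \emph{above} by $O(1/r)$ and would deliver $\partial_t h\leq 0$, the opposite monotonicity; moreover the maximum-principle step is itself unjustified for general $(\alpha,\beta)$, since the diffusion coefficient $\alpha+(n-1)\beta$ and the reaction term $2\alpha\|Ric(t)\|^2_{g(t)}+\beta R^2(t)$ in \eqref{1.24} have no prescribed signs under hypotheses (i)--(ii). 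So the bulk term must be controlled by the static curvature bound in (i), not by an evolution estimate.

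The rest of your argument is sound and in fact more careful than the paper, which hides the boundary term inside the citation of \cite{v:c}. One small correction there: in a noncompact universal cover the area of $\partial\tilde B(r)$ typically grows exponentially, so the usable statement is the one you also give, namely that $\mathrm{Area}(\partial\tilde B(r))/V_{\tilde g(t)}(\tilde B(r))$ stays bounded (Bishop--Gromov/Croke with the bounds from (i)), which combined with a uniform bound on the normal speed of the boundary (from the tensorial form of $\partial_t g(t)$ and (i)) makes that contribution $O(1/r)$ and hence negligible in the entropy limit.
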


\begin{proof}

 With a computation similar to that of \cite{v:c} and using $\eqref{1.26}$ we obtain:
\begin{equation}
\label{2.9}
\partial _t \omega (M, g(t), r)=-\frac{2\alpha +n\beta }{2rV_{\tilde{g}(t)}(\tilde{B}(r))}\int _{\tilde{B}(r)}\tilde{R}(t)d\tilde{ \mu }(t).
\end{equation}
The hypothesis (i) assures the existence of a constant $C>0$ such that:
\begin{equation}
\label{2.10}
\lim _{r\rightarrow \infty}\frac{1}{V_{\tilde{g}(t)}(\tilde{B}(r))}\int _{\tilde{B}(r)}\tilde{R}(t)d\tilde{ \mu }(t)\leq n(n-1)C.
\end{equation}
Hence, with (ii) we get $\partial _th(M, g(t))\geq 0$ which means the conclusion. 

\end{proof}

\section{The orthogonal companions of the 2D Ricci flow equation and the associated solitonic PDEs}

In this section we start with the general conformal-Euclidean 2D flow on an open subset of the plane, \cite[p. 286]{ci:co}:
\begin{equation}
\label{3.1}
g(t)=\exp(h(x, y, t))[dx^2+dy^2]
\end{equation}
for which the Ricci flow equation is:
\begin{equation}
\label{3.2}
(\exp h)_t=\Delta h 
\end{equation}
with $\Delta $ the Euclidean Laplacian: $\Delta h=h_{xx}+h_{yy}$.

In the following, we consider the Ricci flow equation $\eqref{3.2}$ in the separable coordinate systems of $\mathbb{R}^2$ having the  model as the paper \cite{b:cp}. \\

$(1)$ The Cartesian coordinates $(x, y)$ form the first separable coordinate system of the plane. With the separation of variables $h=f(t, x)g(t, y)$ we have:
\begin{equation}
\label{3.3}
(f_tg+fg_t)\exp (fg)=f_{xx}g+fg_{yy}, 
\end{equation}
while the separation of variables $h=f(t, x)+g(t, y)$ yields the equation:
\begin{equation}
\label{3.4}
(f_t+g_t)\exp (f+g)=f_{xx}+g_{yy}. 
\end{equation}

The following three systems are: \\

$(2)$  Polar coordinates: $u=x\cos y$, $v=x\sin y$. The equation $\eqref{3.2}$ becomes {\it the polar Ricci flow equation}:
\begin{align}
\label{3.5}
h_{uu}(\cos ^2y+x^2\sin ^2y)+&h_{vv}(\sin ^2y+x^2\cos ^2y)\\ \notag
+&2h_{uv}\sin y\cos y(1-x^2)=(\exp h)_t. 
\end{align}
\begin{itemize}
\item[(2.1)] By searching for $h$ of the form $h=\varphi (t, w=u+\alpha v)$ we get the {\it polar solitonic Ricci flow equation}:
\begin{align}
\label{3.6}
\varphi _{ww}[\cos ^2y+&x^2\sin ^2y+\alpha ^2(\sin ^2y+x^2\cos ^2y)\\ \notag
+&2\alpha \sin y\cos y(1-x^2)]=\varphi _t\exp \varphi . 
\end{align}
\item[(2.2)] By searching for $h$ of the form $h=f(t, u)g(t, v)$ we obtain:
\begin{align}
\label{3.7}
f_{uu}g(\cos ^2y+x^2\sin ^2y)+&fg_{vv}(\sin ^2y+x^2\cos ^2y)\\ \notag
+&2f_{u}g_{v}\sin y\cos y(1-x^2)=(f_tg+fg_t)\exp(fg). 
\end{align}
\item[(2.3)] By searching for $h$ of the form $h=f(t, u)+g(t, v)$ we derive:
\begin{equation}
\label{3.8}
f_{uu}(\cos ^2y+x^2\sin ^2y)+g_{vv}(\sin ^2y+x^2\cos ^2y)=(f_t+g_t)\exp(fg). 
\end{equation}
\end{itemize}

$(3)$ Parabolic coordinates: $\xi =\frac{1}{2}(u^2-v^2)$, $\eta =uv$. The Euclidean metric $g=d\xi ^2+d\eta ^2$ becomes a Liouville one: $g=(u^2+v^2)(du^2+dv^2)$ and the Ricci flow equation
is now {\it the parabolic Ricci flow equation}:
\begin{equation}
\label{3.9}
2\sqrt{\xi ^2+\eta ^2}(h_{\xi \xi }+h_{\eta \eta })=(\exp h)_t.
\end{equation}
\begin{itemize}
\item[(3.1)] With $h=\varphi (t, w=\xi +\alpha \eta )$, we derive {\it the parabolic solitonic Ricci flow equation}:
\begin{equation}
\label{3.10}
2\sqrt{\xi ^2+\eta ^2}(1+\alpha ^2)\varphi _{ww}=\varphi _t(\exp \varphi ). 
\end{equation}
\item[(3.2)] For $h=f(t, \xi )g(t, \eta )$ we get:
\begin{equation}
\label{3.11}
2\sqrt{\xi ^2+\eta ^2}(f_{\xi \xi }g+fg_{\eta \eta })=(f_tg+fg_t)(\exp fg). 
\end{equation}
\item[(3.3)] For $h=f(t, \xi )+g(t, \eta )$ we obtain:
\begin{equation}
\label{3.12}
2\sqrt{\xi ^2+\eta ^2}(f_{\xi \xi }+g_{\eta \eta })=(f_t+g_t)\exp (f+g). 
\end{equation}
\end{itemize}

$(4)$ Elliptic coordinates: $x^2=c^2(u-1)(v-1)$, $y^2=-c^2uv$. The Euclidean metric $g=dx^2+dy^2$ becomes a Lorentzian one:
$g=\frac{c^2(v-u)}{4}\left(\frac{du^2}{u(u-1)}-\frac{dv^2}{v(v-1)}\right)$ and the Ricci flow equation is now {\it the elliptic Ricci flow equation}:
\begin{align}
\label{3.13}
(\exp h)_t=&h_x\Delta _{u, v}x+h_y\Delta _{u, v}y\\ \notag
+&\frac{c^2}{4}\left[h_{xx}\frac{v-u}{u(u-1)}+2h_{xy}\left(\frac{v(1-v)}{u(u-1)}+\frac{u(1-u)}{v(v-1)}\right)+h_{yy}\frac{u-v}{v(v-1)}\right],
\end{align}
where $\Delta _{u, v}$ is the Euclidean Laplacian in the variables $(u, v)$; supposing $c>0$ we have:
\begin{equation}
\label{3.14}
\left\{ \begin{array}{ll}
\Delta _{u, v}x=-\frac{c}{4}\left(\frac{1}{u-1}\sqrt{\frac{v-1}{u-1}}+\frac{1}{v-1}\sqrt{\frac{u-1}{v-1}}\right), \\
\\
\Delta _{u,
v}y=-\frac{c}{4}\left(\frac{1}{u}\sqrt{\frac{-v}{u}}+\frac{1}{v}\sqrt{\frac{-u}{v}}\right). 
\end{array} \right.
\end{equation}
\begin{itemize}
\item[(4.1)] With $h=\varphi (t, w=x+\alpha y)$ we obtain {\it the elliptic solitonic Ricci flow equation}:
\begin{align}
\label{3.15}
\varphi _t\exp \varphi=&\varphi _w(\Delta _{u, v}x+\alpha h_y\Delta _{u, v}y)\\ \notag
+&\varphi _{ww}[\frac{v-u}{u(u-1)}+2\alpha \left(\frac{v(1-v)}{u(u-1)}+\frac{u(1-u)}{v(v-1)}\right)+\alpha
^2\frac{u-v}{v(v-1)}]. 
\end{align}
\item[(4.2)] With $h=f(t, x)g(t, y)$ we get:
\begin{align}
\label{3.16}
(f_tg+fg_t)(\exp fg)=&f_xg\Delta _{u, v}x+fg_y\Delta _{u, v}y+f_{xx}g\frac{v-u}{u(u-1)}\\ \notag
+&2f_{x}g_{y}\left(\frac{v(1-v)}{u(u-1)}+\frac{u(1-u)}{v(v-1)}\right)+fg_{yy}\frac{u-v}{v(v-1)}.
\end{align}
\item[(4.3)] With $h=f(t, x)+g(t, y)$ we obtain:
\begin{align}
\label{3.17}
(f_t+g_t)\exp (f+g)=&f_x\Delta _{u, v}x+g_y\Delta _{u, v}y\\ \notag
+&f_{xx}\frac{v-u}{u(u-1)}+g_{yy}\frac{u-v}{v(v-1)}. 
\end{align}
\end{itemize}
On this way, we add to the PDE flavor of the topic of Ricci flow three new equations: the polar, the parabolic and the elliptic Ricci flow equation, as well as their solitonic
companions.

\section*{Acknowledgement} 

The second author expresses her appreciation to The Scientific and Technological Research Council of Turkey (T\"UB{\.I}TAK) for  financial support (Grant Number:1059B141600696) during her research at the Mathematical Department of "Gh. Asachi" Technical University of Iasi. 


\end{document}